\documentclass[11pt,a4paper]{paper}

\pdfoutput=1 


\usepackage[latin1]{inputenc}

\usepackage{url}
\usepackage{enumerate}
\usepackage{pdflscape}

\usepackage{multirow,tabularx}
\usepackage{subfigure}
\usepackage{multicol}
\usepackage{booktabs}
\usepackage{chngpage} 

\usepackage{graphicx,color}

\usepackage{amsmath,amssymb}
\usepackage{amsfonts}

\numberwithin{equation}{section}


\usepackage{threeparttable}

\usepackage{setspace}
\onehalfspacing

	\topmargin -1cm 
	\oddsidemargin 5mm 
	\evensidemargin 5mm 
	\textwidth 15.5cm
	\textheight 24.5cm

\newtheorem{theorem}{Theorem}[section]

\newtheorem{proposition}[theorem]{\bf Proposition}
\newtheorem{corollary}[theorem]{\bf Corollary}

\newenvironment{proof}[1][Proof.]{\begin{trivlist}
\item[\hskip \labelsep {\bfseries #1}]}{\end{trivlist}}

\title{{\small Working paper}\\
A generalized formulation for vehicle routing problems}
\author{Pedro Munari$^a$, \ Twan Dollevoet$^b$, \ Remy Spliet$^b$ \\
\small $^a$Production Engineering Department, Federal University of S\~ao Carlos, Brazil\\
\small $^{b}$Econometric Institute, Erasmus University Rotterdam, The Netherlands\\
\small munari@dep.ufscar.br, \ dollevoet@ese.eur.nl, \ spliet@ese.eur.nl\\
\small September, 2017}

\begin{document}


\allowdisplaybreaks

\maketitle

\begin{abstract} 
Different types of formulations are proposed in the literature to model vehicle routing problems. Currently, the most used ones can be fitted into two classes, namely vehicle flow formulations and set partitioning formulations. These types of formulations differ from each other not only due to their variables and constraints but also due to their main features. Vehicle flow formulations have the advantage of being compact models, so general-purpose optimization packages can be used to straightforwardly solve them. However, they typically show weak linear relaxations and have a large number of constraints. Branch-and-cut methods based on specialized valid inequalities can also be devised to solve these formulations, but they have not shown to be effective for large-scale instances. On the other hand, set partitioning formulations have stronger linear relaxations, but requires the implementation of sophisticate techniques such as column generation and specialized branch-and-price methods. Due to all these reasons, so far it is has been recognized in the vehicle routing community that these two types of formulations are rather different. In this paper, we show that they are actually strongly related as they correspond to special cases of a generalized formulation of vehicle routing problems. 
\end{abstract} 

\section{Introduction}

The literature on vehicle routing problems has become very rich and covers nowadays a variety of applications, modeling approaches and solution methods \cite{toth2014}. Due to their huge importance in practice, these problems have called the attention of many researchers and motivated a large number of collaborations between companies and academia \cite{golden2008}. In addition, vehicle routing problems lead to challenging formulations that require the development of sophisticate solution strategies and motivates the design of clever heuristics and meta-heuristics \cite{baldacci2012,laporte2013}.

Vehicle routing problems are typically modeled using two different types of formulations. The first type, known as vehicle flow (VF) formulation, is based on binary variables associated to arcs of a network representation of the problem. In general, this is more intuitive and leads to a compact model that can be straightforwardly put on a black-box optimization solver. Also, valid inequalities and constraints (most of them exponential in terms of the number of customers) have been used to achieve a more effective strategy, resulting in specialized branch-and-cut methods. However, even with the use of very elaborate inequalities, VF formulations may be still very challenging for current optimization solvers. The main reason is the weak linear relaxation of these formulations.

A stronger linear relaxation is observed in the second type of models, known as set partitioning (SP) formulation. The number of constraints in this formulation is much smaller with respect to a VF formulation, but it has a huge number of variables: one for each feasible route in the problem. In the vast majority of cases, generating all these routes is not viable and hence the column generation technique is required to generate columns in an iterative way. Columns correspond to an incidence vector of feasible routes, which are generated by solving a resource constrained shortest path problem (RCESPP). Most implementations solve the RCESPP by a label-setting algorithm, which is aided with clever strategies to improve its performance \cite{martinelli2014, contardo2015}. The solution strategies based on SP formulations are currently the most efficient to obtain optimal solutions of vehicle routing problems \cite{baldacci2012,pecin2016}. Still, the performance can be very poor on problems that allow long routes, i.e. routes that visit many customers.
 
From this brief description of the two most used types of VRP formulations, we can observe that they have many opposite features and then can be recognized as very different from each other. However, in this paper we show that they are not so different, as they are actually special cases of a general formulation of vehicle routing problems, which we call as $p$-step formulation. In fact, this is a family of formulations, as different values of $p$ lead to different formulations. We show that the VF formulation and the SP formulation are $p$-step formulations with particular choices of $p$. In addition, we prove a relationship between the bounds provided by the linear relaxation of $p$-step formulations with different $p$. Column generation can also be used to solve a $p$-step formulation, with the advantage that more dual information is sent to the RCESPP with respect to SP formulations.

The $p$-step formulation associate variables to partial paths in the network representation of the problem. This has the potential of reducing the difficulty of solving problems that allow long routes, the big challenge in a SP formulation. On the other hand, $p$-step formulations may lead to stronger linear relaxations than a VF formulation, the main weakness of this latter. Many other advantages can be achieved by using a $p$-step formulation, as we enunciate ahead in this paper.

A formulation based on partial paths has also been proposed in \cite{petersen2009}, for the VRP with time windows. Similarly to the $p$-step formulation, the partial paths can start and end at any node of the network and must visit exactly a given number of customers. The authors obtain this formulation by applying Dantzig-Wolfe decomposition to a modified vehicle flow formulation of the problem, which relies on a modified graph to represent the solution as a giant tour. They prove that the linear relaxation of the resulting model provides a bound that is larger than or equal to the bound provided by the standard two-index flow formulation. The relationship between formulations with different resource bounds is not analyzed by the authors and no computational experiments are reported for the proposed formulation.


A similar idea has also been applied to other types of problem. In \cite{fragkos2016}, the authors propose a formulation based on horizon decomposition for the capacitated lot sizing problem with setup times. They partition the time horizon in several subsets, possibly with overlap, to have smaller subproblems, so they can be quickly solved by a black-box optimization solver. In the column generation framework, columns become associated to production plans defined for only one of the partitions of the time horizon. These partial production plans are then combined in the master problems, as in the p-step formulation.

The remainder of this paper has the following structure. In Section \ref{sec:classical}, we review the vehicle flow and set partitioning formulations and quickly discuss about their main features. In Section \ref{sec:family}, we propose the $p$-step formulation and present theoretical results that relate the formulations obtained using different choices of $p$. The column generation scheme for $p$-step formulations is proposed in Section \ref{sec:cg:pstep}, followed by a discussion of its main advantages and disadvantages in Section \ref{sec:advantages}.

\section{Classical VRP formulations}\label{sec:classical}

In this section, we review the formulations of two classical VRP variants, namely the capacitated vehicle routing problem (CVRP) and the vehicle routing problem with time windows (VRPTW). These formulations are the basis for many other variants of the problem. The purpose is to set the notation, nomenclature and foundations for the remaining sections of this paper. 

Consider a set of customers represented by $\mathcal{C} = \{ 1, \ldots, n \}$, such that a positive demand is associated to each customer $i \in \mathcal{C}$. To service these customers, we have to design routes for a fleet with $K$ vehicles available in a single depot. 
Each route must start at the depot, visit a subset of customers and then return to the depot. All customers must be visited exactly once.
Each vehicle has a maximum capacity $Q$, which limits the number of customers it can visit before returning to the depot. For the sake of clarity, we assume a homogeneous fleet of vehicles, but the discussion presented ahead can be easily extended to a heterogeneous fleet.

We represent the problem using a graph $G(\mathcal{N},\mathcal{E})$, in which $\mathcal{N} = \mathcal{C} \cup \{0, n+1 \}$ is the set of nodes associated to customers in $\mathcal{C}$ and to the depot nodes $0$ and $n+1$. We use two nodes to represent the same single depot and impose that all routes must start on $0$ and return to $n+1$. Set $\mathcal{E}$ contains the arcs $(i,j)$ for each pair of nodes $i,j \in \mathcal{N}$ (we assume a complete graph). The cost of crossing an arc $(i,j) \in \mathcal{E}$ is denoted by $c_{ij}$. Each node has a demand $q_i$, such that $q_i > 0$ for each $i \in \mathcal{C}$ and $q_0 = q_{n+1} = 0$. The objective of the problem is to determine a set of minimal cost routes that satisfies all the requirements defined above. 

\subsection{Two-index vehicle flow formulation}\label{sec:det:twoindex}

In the two-index VF formulation, we define the binary decision variable $x_{ij}$ that assumes value $1$ if and only if there is a route that goes from customer $i$ to $j$ directly, for $i, j \in \mathcal{N}$. In addition, $y_{j}$ is a continuous decision variable corresponding to the cumulated demand on the route that visits node $j \in \mathcal{N}$ up to this visit. With these parameters and decision variables, the two-index flow formulation of the CVRP if given by:
\begin{eqnarray}
\mbox{min} & \displaystyle \sum_{i=0}^{n+1} \sum_{j=0}^{n+1} c_{ij} x_{ij} &  \label{eq:vrptwarc2:obj} \\
\mbox{s.t.} & \displaystyle \sum_{j=1 \atop j \neq i}^{n+1}  x_{ij} = 1,& \ \ i = 1, \ldots, n, \label{eq:vrptwarc2:1} \\
& \displaystyle \sum_{i=0 \atop i \neq h}^{n} x_{ih} - \sum_{j=1 \atop j \neq h}^{n+1} x_{hj} = 0, & \ \ h = 1, \ldots, n, \label{eq:vrptwarc2:2} \\
& \displaystyle \sum_{j=1}^{n}  x_{0j} \leq K, &  \label{eq:vrptwarc2:10} \\
& \displaystyle y_{j} \geq y_{i} + q_j x_{ij} - Q ( 1 - x_{ij} ), & \ \ i,j = 0, \ldots, n+1, \label{eq:vrptwarc2:4} \\
& \displaystyle d_{i} \leq y_{i} \leq Q, & \ \ i = 0, \ldots, n+1, \label{eq:vrptwarc2:6} \\
& x_{ij} \in \{0, 1\}, & \ \  i,j = 0, \ldots, n+1. \label{eq:vrptwarc2:8}
\end{eqnarray}
Constraints \eqref{eq:vrptwarc2:1} ensure that all customers are visited exactly once. Constraints \eqref{eq:vrptwarc2:2} guarantee the correct flow of vehicles through the arcs, by stating that if a vehicle arrives to a node $h \in \mathcal{N}$, then it must depart from this node. Constraint  \eqref{eq:vrptwarc2:10} limits the maximum number of routes to $K$, the number of vehicles.
Constraints \eqref{eq:vrptwarc2:4} and \eqref{eq:vrptwarc2:6} ensure together that the vehicle capacity is not exceeded. The objective function is defined by \eqref{eq:vrptwarc2:obj} and imposes that the total travel cost of the routes is minimized.

Constraints \eqref{eq:vrptwarc2:4} also avoid subtours in the solution, \textit{i.e.} cycling routes that do not pass through the depot. Different types of constraints are proposed in the literature to impose vehicle capacities and/or avoid subtours \cite{irnich2014}. The advantage of using \eqref{eq:vrptwarc2:4} and \eqref{eq:vrptwarc2:6} is that the model has a polynomial number of constraints in terms of the number of customers. However, the lower bound provided by the linear relaxation of this model is known to be weak in relation to other models. Hence, many authors recur to capacity constraints that results in better lower bounds, even though the number of constraints becomes exponential in terms of the number of customers, requiring the use of a branch-and-cut strategy \cite{semet2014}. 


The VRPTW is an entension of the CVRP, in which customer time windows are imposed for the visits. A time window corresponds to a time interval $[w^{a}_{i}, w^{b}_{i}]$ which imposes that the service at node $i \in \mathcal{N}$ cannot start earlier than the time instant $w^{a}_{i}$ nor later than $w^{b}_{i}$. If the vehicle arrives before than $w^{a}_{i}$, then it has to wait until this instant to start servicing the node. To each arc $(i,j) \in \mathcal{E}$, we assign a travel time $t_{ij}$, which respects triangle inequality. Also, each node $i$ has a service time $t_i$ that corresponds to the minimum amount of time that the vehicle has to stay in a visited node. 

Let $w_{i}$ be a continuous decision variable representing the time instant that the service starts at node $i \in \mathcal{N}$. We obtain a model for the VRPTW by adding the following constraints to the formulation \eqref{eq:vrptwarc2:obj}--\eqref{eq:vrptwarc2:8}:
\begin{eqnarray}
& w_{j} \geq w_{i} + ( s_i + t_{ij} ) x_{ij} - M_{ij} ( 1 - x_{ij} ), & \ \ i = 0, \ldots, n; \ \ j = 1, \ldots, n+1, \label{eq:vrptwarc2:11} \\ 
& w^a_i \leq w_{i} \leq w^b_i, & \ \ i = 0, \ldots, n+1, \label{eq:vrptwarc2:12} 
\end{eqnarray}
where $M_{ij}$ is a sufficiently large value, which can be defined as $M_{ij} = \max\{ w^b_i - w^a_j, 0 \}$. 

\subsection{Set partitioning formulation}\label{sec:det:setpart}


Currently, the most efficient exact methods for solving VRP variants are based on SP formulations. The variables in these formulations correspond to feasible routes of the problem. Let $\mathcal{R}$ be the set of routes that satisfy the problem requirements. For example, in the CVRP, a route in $\mathcal{R}$ must start and finish at the depot, visit at most once a customer, respect the vehicle capacity and guarantee that if the route arrives to a customer than it has to leave this customer. The same requirements are valid for the VRPTW, in addition to satisfying time windows of all visited customers.

Let $\lambda_r$ be the binary decision variable that is equal to 1 if and only if the route $r \in \mathcal{R}$ is selected. The SP formulation is as follows:
\begin{eqnarray}
\mbox{min} & \displaystyle \sum_{r \in \mathcal{R}} c_{r} \lambda_r &  \label{eq:vrp:sp:obj} \\
\mbox{s.t.} & \displaystyle \sum_{r \in \mathcal{R}} a_{ri} \lambda_r = 1, & i \in \mathcal{C}, \label{eq:vrp:sp:1} \\
 & \displaystyle \sum_{r \in \mathcal{R}} \lambda_r \leq K, & \label{eq:vrp:sp:2} \\
 & \displaystyle \lambda_r  \in \{0, 1\}, & r \in \mathcal{R}. \label{eq:vrp:sp:3}
\end{eqnarray}
This can be used to model the CVRP, VRPTW and many other VRP variants, depending on how we define the set of routes $\mathcal{R}$. The objective function \eqref{eq:vrp:sp:obj} minimizes the total cost of the selected routes. The cost of route $r \in \mathcal{R}$, denoted by $c_{r}$, is computed using the arc costs $c_{ij}$ defined above. Namely, given a route $r$ that sequentially visits nodes $i_0, i_1, \ldots, i_p$, $p>0$, its total cost is given by
\begin{equation}
c_{r} = \sum_{j = 0}^{p-1} c_{ i_{j} i_{j+1} },
\end{equation}
Constraints \eqref{eq:vrp:sp:1} impose exactly one visit to each customer node. Each column $a_{r} = (a_{r1}, \ldots, a_{rn})^{T}$ is a binary vector in which $a_{ri} = 1$ if and only if the corresponding route $r$ visits customer $i$. Constraint \eqref{eq:vrp:sp:2} imposes the maximum number of vehicles available at the depot. If $K$ is sufficiently large for the problem, than this constraint can be dropped from the formulation. 

Generating all routes of $\mathcal{R}$ is impractical in general, as the number of routes is exponential in terms of the numbers of customers. Hence, set partitioning formulations require using the column generation technique for solving the linear relaxation of model \eqref{eq:vrp:sp:obj}--\eqref{eq:vrp:sp:3} \cite{lubbecke2005}. As a consequence, to obtain optimal integer solutions we need a branch-and-price method \cite{poggi2014}. In the column generation technique, we start with a small subset of routes $\overline{\mathcal{R}} \subset \mathcal{R}$ that is used to create the following restricted master problem (RMP):
\begin{eqnarray}
\mbox{min} & \displaystyle \sum_{r \in \overline{\mathcal{R}}} c_{r} \lambda_r &  \label{eq:vrp:rmp:obj} \\
\mbox{s.t.} & \displaystyle \sum_{r \in \overline{\mathcal{R}}} a_{ri} \lambda_r = 1, & i \in \mathcal{C}, \label{eq:vrp:rmp:1} \\
 & \displaystyle \sum_{r \in \overline{\mathcal{R}}} \lambda_r \leq K, & \label{eq:vrp:rmp:2} \\
 & \displaystyle \lambda_r  \geq 0, & r \in \overline{\mathcal{R}}. \label{eq:vrp:rmp:3}
\end{eqnarray}
Notice that the RMP is the linear relaxation of \eqref{eq:vrp:sp:obj}--\eqref{eq:vrp:sp:3}, but considering only a subset of variables. Let $u = (u_1, \ldots, u_n) \in \mathbb{R}^{n}$ and $\sigma \in \mathbb{R}_{-}$ be the dual variables associated to constraints \eqref{eq:vrp:rmp:1} and \eqref{eq:vrp:rmp:2}, respectively. At each iteration of the column generation method, we solve the RMP to obtain a dual solution $(\overline{u},\overline{\sigma})$ that is used to generate the columns that are not in the RMP yet. These columns are associated with feasible routes obtained by solving the following subproblem:
\begin{equation}
\displaystyle \min_{r \in \mathcal{R}} rc(\overline{u},\overline{\sigma}) = \sum_{i \in \mathcal{N}} \sum_{j \in \mathcal{N}} (c_{ij} - \overline{u}_i^{ }) x_{rij} - \overline{\sigma}  \label{eq:subproblem}
\end{equation}
where $\overline{u}_0 = \overline{u}_{n+1} = 0$ and  $x_{r} = \{ x_{rij} \}_{i,j \in \mathcal{N}}$ is a binary vector such that $x_{rij} = 1$ if and only if route $r \in \mathcal{R}$ visits node $i$ and goes directly to node $j$. This subproblem is a \textit{resource constrained elementary shortest path problems} (RCESPP) \cite{irnich2005}.
Let $\bar{x}_{r}$ be associated to an optimal route $r$ of the subproblem. If the corresponding value $rc(\overline{u},\overline{\sigma})$ is negative, then a new variable $\lambda_r$ can be added to the RMP using this route. Indeed, $rc(\overline{u},\overline{\sigma})$ is the reduced cost of this new variable, for which we have the following cost and column coefficients:
\begin{equation}
c_r := \sum_{i\in \mathcal{N}} \sum_{j\in \mathcal{N}} c_{ij}^{ } \bar{x}_{rij}, \nonumber
\end{equation}
\begin{equation}
a_{ri} := \sum_{j \in \mathcal{N}} \bar{x}_{rij}, \ i \in \mathcal{C}. \nonumber
\end{equation}
Hence, $r$ is added to $\overline{\mathcal{R}}$ and the new RMP has to be solved again. If $rc(\overline{u},\overline{\sigma})$ is nonnegative and $(\overline{u},\overline{\sigma})$ are optimal dual solutions of the current RMP, then the optimal solution of the current RMP is also optimal for the linear relaxation of the MP. Hence, the column generation method terminates successfully. 

The performance of a computational implementation of the column generation algorithm is strongly dependent on the way RMPs and subproblems are solved. To be successful, implementations should quickly solve the RMPs and use stable dual solutions that help to reduce the total number of iterations \cite{lubbecke2005,munari2013,munari2015}. Solving the RCESPP effectively is also a very important requirement in a column generation algorithm for VRP variants. Although integer programming formulations are available for the RCESPP, they cannot be solved effectively by the current state-of-the-art optimization solvers \cite{pugliese2013}. The current best strategies use a label-setting algorithm based on dynamic programming. This algorithm was originally proposed by \cite{desrochers1988} and \cite{beasley1989} and since then has been continuously improved \cite{feillet2004, righini2008, chabrier2006, desaulniers2008, baldacci2011, martinelli2014, contardo2015}.


\section{A family of vehicle routing problem formulations}\label{sec:family}

In this section, we propose a generalized VRP formulation. The idea of this new family of formulations is to have binary variables associated to up to $p$ (sequential) steps in the network. One step corresponds to traversing a given arc in the network, so $p$ steps correspond to a partial path that traverses exactly $p$ arcs. Let $\mathcal{S}$ be the set of all feasible $p$-steps in the network, including also all the feasible $k$-step paths that start at node $0$, for all $k = 1, \ldots, p-1$, when $p > 1$. By feasible we mean that the arcs can be traversed sequentially and that none of the resources are violated. For instance, if $p=2$ then $\mathcal{S}$ is the set of all partial paths of the forms $i-j-k$ and $0-j$, for any $i, j, k \in \mathcal{N}$.
Given a partial $k$-step path $s \in \mathcal{S}$, for $k = 1, \ldots, p$, we denote by $i_s$ and $j_s$ its first and last nodes, respectively. Let $\lambda_{s}$ be a binary variable that is equal to $1$ if and only if the arcs in $s \in \mathcal{S}$ are traversed sequentially in the optimal path. For the sake of clarity, we assume at first that capacity is the only resource in the problem. Let $\varphi_j$ be a continuous decision variable that is equal to the cumulated demand of all nodes visited by a route up to node $j$ (inclusive). The $p$-step formulation for the CVRP is as follows:
\begin{eqnarray}
\mbox{min} & \displaystyle  \sum_{s \in \mathcal{S}} c_{s} \lambda_{s} &  \label{eq:vrp:pstep:obj} \\
\mbox{s.t.} & \displaystyle  \sum_{s \in \mathcal{S}} e_{i}^{s} \lambda_{s} = 1, & i = 1, \ldots, n, \label{eq:vrp:pstep:2} \\
& \displaystyle  \sum_{s \in \mathcal{S}} a_{i}^{s} \lambda_{s} = 0, & i = 1, \ldots, n, \label{eq:vrp:pstep:1a} \\
& \displaystyle  \sum_{s \in \mathcal{S}} a_{0}^{s} \lambda_{s} \leq K, &  \label{eq:vrp:pstep:1b} \\
& \displaystyle \varphi_{j} \geq \varphi_{i} + q_j \sum_{s \in \mathcal{S}_{ij}} \lambda_{s} - Q (1 - \sum_{s \in \mathcal{S}_{ij}} \lambda_s), & i = 0, \ldots, n, \ j = 1, \ldots, n+1, \label{eq:vrp:pstep:3}\\
& \displaystyle q_{i} \leq \varphi_{i} \leq Q, &  i = 1, \ldots, n, \label{eq:vrp:pstep:4} \\
& \displaystyle \lambda_s  \in \{0, 1\}, & s \in \mathcal{S}, \label{eq:vrp:pstep:5}
\end{eqnarray}
where $\mathcal{S}_{ij} \subset \mathcal{S}$ contains only the paths that traverse arc $(i,j)$ for a given pair $i,j \in \mathcal{N}$; $c_{s}$ is the total cost of traversing all arcs in path $s$; and $a^{s}$ and $e^{s}$ are vectors defined as 
\begin{equation}
a^{s}_i = \left\{
	\begin{array}{rl}
		+1,& \mbox{if } i \mbox{ is the first node visited by path } s,\\
		-1,& \mbox{if } i \mbox{ is the last node visited by path } s,\\
		0, & \mbox{otherwise},
	\end{array} \right. 
\end{equation}
\begin{equation}
e^{s}_i = \left\{
	\begin{array}{rl}
		+1,& \mbox{if } i \mbox{ is visited by path } s\mbox{, but it is not the last node of } s, \\
		0, & \mbox{otherwise},
	\end{array} \right. 
\end{equation}
for all $i = 0, 1, \ldots, n$ and  $s \in \mathcal{S}$. In this formulation, constraints \eqref{eq:vrp:pstep:2} impose that each customer node is visited at most once; constraints \eqref{eq:vrp:pstep:1a} ensure that if two paths are linked, then the last node in one path is the same as the first node in the other; constraint \eqref{eq:vrp:pstep:1b} imposes the maximum number of (complete) routes in an optimal solution; constraints \eqref{eq:vrp:pstep:3} and \eqref{eq:vrp:pstep:4} ensure that routes satisfy the capacity resource and has no subtours; and \eqref{eq:vrp:pstep:5} impose the binary domain of the decision variables $\lambda$. Notice that the $p$-step paths in $\mathcal{S}$ must traverse exactly $p$ arcs in the network. The only partial paths in $\mathcal{S}$ that are allowed to cross less than $p$ arcs are those that starts on the depot node 0.

Customer time windows can also be included in the $p$-step formulation by adding the following constraints, resulting in the VRPTW $p$-step model:
\begin{equation}
\omega_{j} \geq \omega_{i} + ( s_i + t_{ij} ) \sum_{s \in \mathcal{S}_{ij}} \lambda_s - M ( 1 - \sum_{s \in \mathcal{S}_{ij}} \lambda_s ), \  i = 0, \ldots, n, \ j = 1, \ldots, n+1,
\end{equation}
\begin{equation}
w^a_i \leq \omega_{i} \leq w^b_i, \ \ i = 0, \ldots, n+1, \nonumber
\end{equation}
where $\omega_i$ is a decision variable that indicates the time instant that the service starts at node $i = 0, \ldots, n+1$. 

Constraints \eqref{eq:vrp:pstep:3} can be written in a coupled way, based only on the first ($i_s$) and last ($j_s$) nodes of the partial path $s \in \mathcal{S}$, as follows:
\begin{equation}
\displaystyle \varphi_{j_s} \geq \varphi_{i_s} + q_s \lambda_{s} - Q (1 - \lambda_s), \ s \in \mathcal{S},
\end{equation}
\begin{equation}
\displaystyle q_{s} \leq \varphi_{i_s} \leq Q, s \in \mathcal{S}.
\end{equation}
where $q_{s}$ is the total demand of the nodes visited by this path, except for its first node. 

\subsection{Special cases of the $p$-step family}

As mentioned before, model \eqref{eq:vrp:pstep:obj}--\eqref{eq:vrp:pstep:5} is a family of formulations, because for each $p = 1, \ldots, n$ we have different paths. Indeed, for particular choices of $p$, we obtain the VF formulation \eqref{eq:vrptwarc2:obj}--\eqref{eq:vrptwarc2:8} and the SP formulation \eqref{eq:vrp:sp:obj}--\eqref{eq:vrp:sp:3}, as presented in Proposition \ref{prop:particularcases}.

\begin{proposition}\label{prop:particularcases}
The vehicle flow formulation \eqref{eq:vrptwarc2:obj}--\eqref{eq:vrptwarc2:8} and the set partitioning formulation \eqref{eq:vrp:sp:obj}--\eqref{eq:vrp:sp:3} are particular cases of the $p$-step formulation, with $p=1$ and $p=n+1$, respectively.
\end{proposition}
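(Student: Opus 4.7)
The plan is to treat the two values $p=1$ and $p=n+1$ separately, in each case exhibiting an identification of variables that reduces model \eqref{eq:vrp:pstep:obj}--\eqref{eq:vrp:pstep:5} to the classical formulation and then verifying that the constraints match term by term.

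For $p=1$, the set $\mathcal{S}$ is precisely the set of single-arc paths, so I would identify $\lambda_s \leftrightarrow x_{ij}$ for $s=(i,j)$. Under this identification the coefficient vectors simplify to $e^s_i = 1$ iff $i = i_s$, while $a^s_i = +1$ if $i = i_s$, $-1$ if $i = j_s$, and $0$ otherwise; moreover $\mathcal{S}_{ij} = \{(i,j)\}$ gives $\sum_{s \in \mathcal{S}_{ij}} \lambda_s = x_{ij}$. Substituting directly, I would verify that \eqref{eq:vrp:pstep:2}, \eqref{eq:vrp:pstep:1a}, \eqref{eq:vrp:pstep:1b}, \eqref{eq:vrp:pstep:3}--\eqref{eq:vrp:pstep:4} and the objective \eqref{eq:vrp:pstep:obj} reproduce exactly \eqref{eq:vrptwarc2:1}, \eqref{eq:vrptwarc2:2}, \eqref{eq:vrptwarc2:10}, \eqref{eq:vrptwarc2:4}--\eqref{eq:vrptwarc2:6} and \eqref{eq:vrptwarc2:obj}, up to renaming $\varphi \mapsto y$.

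For $p=n+1$, I would first observe that any feasible complete route in $\mathcal{R}$ traverses at most $n+1$ arcs, since it visits each customer at most once. Hence every $r \in \mathcal{R}$ occurs in $\mathcal{S}$ as a $k$-step path starting at node $0$ and ending at node $n+1$ for some $k \leq n+1$; conversely, any such path in $\mathcal{S}$ is exactly a feasible complete route. I would then identify $\lambda_s \leftrightarrow \lambda_r$ along this bijection and check that $e^s_i$ coincides with the incidence coefficient $a_{ri}$, $a^s_0 = 1$, $a^s_{n+1} = -1$, and $a^s_i = 0$ for every customer $i$. Substituting, \eqref{eq:vrp:pstep:2} becomes \eqref{eq:vrp:sp:1}, \eqref{eq:vrp:pstep:1a} becomes trivially satisfied, \eqref{eq:vrp:pstep:1b} becomes \eqref{eq:vrp:sp:2}, the constraints \eqref{eq:vrp:pstep:3}--\eqref{eq:vrp:pstep:4} become redundant since capacity is already embedded in the definition of $\mathcal{R}$, and the objective matches \eqref{eq:vrp:sp:obj}.

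The main obstacle I anticipate is justifying, in the $p=n+1$ case, that it suffices to restrict attention to paths starting at $0$ and ending at $n+1$; concretely, one must rule out feasible solutions in which $\lambda_s = 1$ for some non-initial path $s$ of exactly $n+1$ arcs. I expect this to follow from combining the flow balance \eqref{eq:vrp:pstep:1a} with the capacity-propagation constraints \eqref{eq:vrp:pstep:3}--\eqref{eq:vrp:pstep:4}: flow balance forces every active chain of paths to begin at node $0$ (no customer node can have net out-flow) and to eventually reach $n+1$, while the $\varphi$-based resource constraints eliminate closed chains among customers; since every non-initial path already consumes exactly $n+1$ arcs and any total chain starting at $0$ has at most $n+1$ arcs, no active chain can contain a non-initial path. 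A cleaner alternative, which avoids the chaining argument altogether, is to observe that for $p=n+1$ every non-initial path in $\mathcal{S}$ is dominated by a path starting at $0$ and may be pruned without affecting the set of feasible solutions, making the bijection with $\mathcal{R}$ immediate.
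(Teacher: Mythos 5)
Your treatment of the $p=1$ case matches the paper's: a direct identification $\lambda_s \leftrightarrow x_{i_s j_s}$ followed by term-by-term substitution, and that part is fine. The gap is in the $p=n+1$ case, and it sits exactly where the paper concentrates its effort. You dismiss \eqref{eq:vrp:pstep:3}--\eqref{eq:vrp:pstep:4} as ``redundant since capacity is already embedded in the definition of $\mathcal{R}$''. That observation explains why these constraints impose no \emph{additional} restriction on which routes may be selected, but it does not show that they can be \emph{satisfied}: they are inequalities in the auxiliary variables $\varphi_i$, which have no counterpart in \eqref{eq:vrp:sp:obj}--\eqref{eq:vrp:sp:3}, so equivalence requires exhibiting, for every feasible set partitioning solution, values $q_i \leq \varphi_i \leq Q$ satisfying \eqref{eq:vrp:pstep:3} for \emph{every} pair $(i,j)$ --- including pairs with $\sum_{s\in\mathcal{S}_{ij}}\lambda_s = 0$, where the constraint reads $\varphi_j \geq \varphi_i - Q$ and must be checked against the chosen values, and, if the equivalence is to hold at the level of linear relaxations as well (the paper's proof is written to cover fractional $\lambda$), pairs carrying fractional flow. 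The paper's proof is almost entirely devoted to this step: it sets $\phi_i^s$ equal to the cumulated demand along $s$ up to node $i$, defines $\varphi_i = \sum_{s}\lambda_s\phi_i^s$, and verifies \eqref{eq:vrp:pstep:4} and \eqref{eq:vrp:pstep:3} by splitting $\mathcal{S}_i$ into $\mathcal{S}_{ij}$ and its complement. You need to supply this construction (or at least its integral special case, $\varphi_i$ equal to the load on the unique route through $i$, together with the zero-flow check) for your argument to close.

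By contrast, the obstacle you single out as the main one --- ruling out selected non-initial $(n+1)$-arc paths --- is comparatively minor, and your chaining argument is heavier than necessary: a feasible elementary path traversing $n+1$ arcs visits all $n+2$ nodes of $G$ and hence must start at $0$ and end at $n+1$; and a $k$-step starting at $0$ but ending at a customer $j$ cannot be selected, because \eqref{eq:vrp:pstep:1a} at $j$ would then require a selected path \emph{starting} at $j$, and for $p=n+1$ no such path exists in $\mathcal{S}$. No appeal to the $\varphi$-constraints is needed there. Your ``cleaner alternative'' (pruning non-initial paths as ``dominated'') is not an argument as stated, since the domination relation is never defined and no reason is given that pruning preserves the feasible set.
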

\begin{proof}
In the $1$-step formulation, the set $\mathcal{S}$ is given by all the single arcs $(i,j)$, with $i,j = 0, 1, \ldots, n+1$. Hence, all variables in this formulation can be rewritten as $\lambda^s = x_{i_s j_s}$, where $(i_s, j_s)$ is the arc traversed by the 1-step path $s \in \mathcal{S}$. By replacing this in the $p$-step formulation \eqref{eq:vrp:pstep:obj}--\eqref{eq:vrp:pstep:5} and noticing that path $s$ can be expressed uniquely by its corresponding pair of nodes $i_s$ and $j_s$, we obtain the VF formulation \eqref{eq:vrptwarc2:obj}--\eqref{eq:vrptwarc2:8}. 

On the other hand, in the $(n+1)$-step formulation, set $\mathcal{S}$ is given by all the $k$-paths that start at node $0$, for $1 < k < n+1$ and all $(n+1)$-steps that start at $0$ end at $n+1$. Hence, $\mathcal{S}$ can be reduced to all feasible complete routes, as in the usual set partitioning formulation. However, the $p$-step formulation contains more variables and more constraints. In order to show that the formulations are equivalent, we show that feasible variables $\phi_i$ can always be chosen. In order to so, let a solution to the set partitioning formulation be given. By definition, every route starts and ends at the depot. Every route corresponds to an $(n+1)$-step or to a $k$-step, with $1\leq k < n+1$. It holds that $e_i^s=1$ if $i$ is visited in the $(n+1)$-step or $k$-step $s$ and $e_i^s = 0$ otherwise, for $1\leq i\leq n$. Defining $\mathcal{S}_i =\{s \in \mathcal{S}: i\in s\}$, Constraint \eqref{eq:vrp:sp:1} implies, for all $1\leq i \leq n$, that
$$1 = \sum_{s \in \mathcal{S}} \lambda_s e_i^s =  \sum_{s\in \mathcal{S}_i} \lambda_s.$$ 
Recall that $\mathcal{S}_{ij} = \{s\in \mathcal{S}_i: (i,j)\in s\}$ and define $\mathcal{S}_i'=\mathcal{S}_i \setminus \mathcal{S}_{ij}$. Then $\mathcal{S}_{ij}$ contains all $k$-steps and $(n+1)$-steps that visit $j$ directly after visiting $i$ and $\mathcal{S}_i'$ contains all other $k$-steps or $(n+1)$-steps that visit $i$. The above implies that
$$1 = \sum_{s \in \mathcal{S}_{ij}} \lambda_s + \sum_{s\in \mathcal{S}_i'}\lambda_s.$$
For each $s\in \mathcal{S}$, we define, for all $1\leq i\leq n$, using the topological ordering $\leq_s$ induced by $s$,
$$\phi_i^s = \left\{\begin{array}{ll}\sum_{i'\in s, i' \leq_s i}q_{i'} &\textrm{if }i \in s, \\0 & \textrm{if }i \notin s.\end{array} \right.$$
as the cumulated demand on $s$ up until node $i$. By construction, $0 \leq \phi_i^s \leq Q$. This implies that $\phi_j^s -\phi_i^s \geq -Q$. Define now
$$\phi_i = \sum_{s\in\mathcal{S}} \lambda_s \phi_i^s = \sum_{s \in \mathcal{S}_i} \lambda_s \phi_i^s.$$ 
If $i$ is visited in $s$, then $q_i \leq \phi_i^s$. In that case $q_i \leq \phi_i^s\leq Q$. Taking a convex combination of these inequalities, we obtain
$$q_i = \sum_{s \in \mathcal{S}_i} \lambda_s q_i  \leq \sum_{s\in \mathcal{S}_i}\lambda_s\phi_i^s \leq \sum_{s\in \mathcal{S}_i} \lambda_sQ =Q.$$
By definition of $\phi_i$, Constraints \eqref{eq:vrp:pstep:4} are satisfied for all $1\leq i \leq n$. 
Furthermore, for any arc $(i,j)$ it holds that 
\begin{align*}
\phi_j - \phi_i &= \sum_{s \in \mathcal{S}} \lambda_s (\phi_j^s -\phi_i^s) \\
&\geq \sum_{s \in \mathcal{S}_i} \lambda_s (\phi_j^s - \phi_i^s)\\
&= \sum_{s\in \mathcal{S}_{ij}} \lambda_s (\phi_j^s - \phi_i^s) + \sum_{s\in \mathcal{S}_i'} \lambda_s (\phi_j^s - \phi_i^s)\\
&\geq \sum_{s\in \mathcal{S}_{ij}} \lambda_s q_j + \sum_{s\in \mathcal{S}_{i}'} \lambda_s (-Q) = q_j \sum_{s\in \mathcal{S}_{ij}} \lambda_s   -Q \left(1 - \sum_{s\in \mathcal{S}_{ij}} \lambda_s\right) 
\end{align*}
This shows that \eqref{eq:vrp:pstep:3} holds for all $0\leq i\leq n$ and $1\leq j\leq n+1$ as well. We conclude that the $p$-step formulation for $p=n+1$ and the set partitioning formulation are equivalent. \hfill$\square$

\end{proof}

\subsection{Intermediate $p$-step formulations}

For any value of $p = 1, \ldots, n+1$, the corresponding $p$-step formulation is a valid vehicle routing problem formulation. 
The basic difference between formulations with different values of $p$ lies in the level of arc coupling in the partial paths. Indeed, in the $1$-step formulation, the arcs are totally detached, so the model has to decide what is the best way of connecting them, without violating other constraints such as elementarity and resource availability. The number of variables in the model is polynomial in terms of the number of nodes, so a general-purpose optimization package could be used to solve it. In addition, generating these paths is quick and straightforward. However, the VF formulation is well known for its poor performance, mainly due to a weak linear relaxation. On the other, in the $(n+1)$-step formulation all the arcs are already attached so the model has only to choose what is the best set of routes. The SP formulation is well recognized by having a stronger linear relaxation, but column generation and branch-and-price methods are required to solve the problem, as the number of variables is exponential in terms of the number of customers. In this case, the difficulty lies in generating the paths, as they must be feasible routes that depart from and return to the depot. These features illustrate that VF formulations and SP formulations are extremal cases of $p$-step formulations. 

At this point, an intriguing question emerges: Is there a choice of $p$ such that the $p$-step formulation has a reasonably strong linear relaxation and performs well in practice? Proposition \ref{prop:particularcasesobj_v2} brings an interesting relationship between $p$-step formulations with different values of $p$, regarding the optimal values of their respective linear relaxations.

\begin{proposition}\label{prop:particularcasesobj_v2}
Let $\tilde{z}_p$ be the optimal value of the linear relaxation of a $p$-step formulation, for $p = 1, \ldots, n+1$. For any $p \in \{1, \ldots, n\}$ and $q\geq 2$ such that $pq \leq n+1$, we have that $\tilde{z}_{pq} \geq \tilde{z}_{p}$.
\end{proposition}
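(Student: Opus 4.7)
The plan is to prove $\tilde{z}_{p}\leq \tilde{z}_{pq}$ by exhibiting, for every feasible LP solution $(\lambda^{pq},\varphi^{pq})$ of the $pq$-step formulation, a feasible LP solution $(\lambda^{p},\varphi^{p})$ of the $p$-step formulation with the same objective value; applying this map to an LP-optimum of the $pq$-step model then gives the desired inequality.

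The main construction is a canonical decomposition of every path $s$ in the $pq$-step set $\mathcal{S}^{(pq)}$ into consecutive sub-paths, each lying in the $p$-step set $\mathcal{S}^{(p)}$. For $s$ of length $pq$ (not necessarily starting at the depot), I split at every $p$-th arc to obtain $q$ consecutive length-$p$ pieces; each is feasible, and hence lies in $\mathcal{S}^{(p)}$, because it is a sub-path of the feasible $s$. For $s$ starting at node $0$ of length $k<pq$, I write $k=r+mp$ with $r\in\{1,\ldots,p\}$ and $m\geq 0$, and split $s$ into a first piece of length $r$ rooted at $0$ (admissible in $\mathcal{S}^{(p)}$ as a partial path starting from the depot) followed by $m$ consecutive length-$p$ pieces. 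With this decomposition fixed, I set
\[
\lambda^{p}_{s'} \;:=\; \sum_{s\in\mathcal{S}^{(pq)}:\,s'\in\mathrm{decomp}(s)} \lambda^{pq}_{s} \qquad\text{for every } s'\in\mathcal{S}^{(p)},
\]
and keep $\varphi^{p}:=\varphi^{pq}$.

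The core observation driving feasibility is that the arcs of $s$ are partitioned exactly once among the pieces of $\mathrm{decomp}(s)$, which yields the arc-level identity $\sum_{s'\in\mathcal{S}^{(p)}_{ij}}\lambda^{p}_{s'}=\sum_{s\in\mathcal{S}^{(pq)}_{ij}}\lambda^{pq}_{s}$ for every pair $(i,j)$. Together with $\varphi^{p}=\varphi^{pq}$, this transfers the capacity linking inequalities \eqref{eq:vrp:pstep:3} and the bounds \eqref{eq:vrp:pstep:4} unchanged, and non-negativity of $\lambda^{p}$ is immediate. Constraints \eqref{eq:vrp:pstep:2}, \eqref{eq:vrp:pstep:1a} and \eqref{eq:vrp:pstep:1b} reduce to showing
\[
\sum_{s'\in\mathrm{decomp}(s)} e^{s'}_{i}=e^{s}_{i}, \qquad \sum_{s'\in\mathrm{decomp}(s)} a^{s'}_{i}=a^{s}_{i},
\]
for every $s\in\mathcal{S}^{(pq)}$ and every relevant index $i\in\{0,1,\ldots,n\}$. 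Granting elementarity of the paths in $\mathcal{S}^{(pq)}$, this becomes a node-by-node bookkeeping argument: at a ``join node'' shared between two consecutive pieces, the $-1$ contributed by being the last node of the previous piece and the $+1$ contributed by being the first node of the next piece cancel for $a$, while the $0$ (last node) and $1$ (first, not last) combine to $1$ for $e$, matching $a^{s}_{i}=0$ and $e^{s}_{i}=1$ at an interior node of $s$; the remaining cases at the global first and last nodes of $s$, at nodes interior to a single piece, and at nodes unvisited by $s$ are direct.

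Since arc costs are additive along $s$ and the decomposition partitions the arcs of $s$, one has $c_{s}=\sum_{s'\in\mathrm{decomp}(s)}c_{s'}$, and therefore $\sum_{s'}c_{s'}\lambda^{p}_{s'}=\sum_{s}c_{s}\lambda^{pq}_{s}$. Applying the construction to an LP-optimal $(\lambda^{pq},\varphi^{pq})$ produces a feasible $(\lambda^{p},\varphi^{p})$ of value $\tilde{z}_{pq}$, so $\tilde{z}_{p}\leq \tilde{z}_{pq}$. The main technical obstacle I anticipate is the coefficient bookkeeping at the interfaces of the decomposition (the $e$- and $a$-identities above), together with verifying that the canonical decomposition behaves correctly in the edge case of short depot-rooted paths; everything else either follows directly from the arc-partition property of the decomposition or is a one-line check.
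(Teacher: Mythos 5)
Your proposal is correct and follows essentially the same route as the paper: cut each $pq$-step of an optimal LP solution into $q$ consecutive $p$-steps (and each depot-rooted $k$-step into a short depot-rooted piece plus full $p$-steps), reuse the $\varphi$ variables, and observe that arc flows, the $e$- and $a$-coefficients, and costs are all preserved. You in fact supply details the paper dismisses with ``it follows easily,'' notably placing the short residual piece at the depot end so it is admissible in $\mathcal{S}^{(p)}$, aggregating $\lambda$ over coinciding pieces, and the interface bookkeeping for the $a$- and $e$-identities.
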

\begin{proof}
Consider the optimal solution using $pq$-steps. This solution selects a set of $pq$-steps and $k$-steps starting at 0, for $1\leq k < pq$, with corresponding $\lambda^1$ and $\phi$ variables. Any given $pq$-step $s$ can be cut into exactly $q$ $p$-steps $s_1,\ldots,s_q$. We define $\lambda^2_{s_i} = \lambda^1_s$ for all $1\leq i \leq q$. Consider any $k$-step $s$ that start at 0, with $k<pq$. Then $k$ can be written as $k=pq'+ k'$, with $0\leq k'<p$ and $0\leq q'<q$. We can cut this $k$-step into $q'$ $p$-steps $s_1,\ldots,s_{q'}$, and, if $k'\neq 0$, one $k'$-step $s'$. We define $\lambda^2_{s_i} = \lambda^1_s$ for all $1\leq i\leq q'$ and $\lambda^2_{s'}=\lambda^1_s$ (if $k'\neq 0$). It follows easily that $(\lambda^2,\phi)$ gives a feasible solution with $p$-steps with the same objective. This shows that $\tilde{z}_p \leq \tilde{z}_{pq}$.\hfill $\square$
\end{proof}

Using $p=1$ and $q=p$ in the above theorem, it follows that any formulation is at least as strong as the vehicle flow formulation.
\begin{corollary}
For any $1 < p \leq n+ 1$, it holds that $ \tilde{z}_{p} \geq \tilde{z}_1$.
\end{corollary}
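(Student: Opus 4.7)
The plan is to derive the corollary as a direct specialization of Proposition \ref{prop:particularcasesobj_v2}. Because the symbol $p$ is overloaded between the two statements, I would first mentally rename the parameters appearing in the proposition to avoid confusion: call them $p'$ and $q$. In this renamed form, the proposition asserts that for any $p' \in \{1,\ldots,n\}$ and any integer $q \geq 2$ with $p'q \leq n+1$, one has $\tilde{z}_{p'q} \geq \tilde{z}_{p'}$.

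The key step is then to instantiate this with $p' = 1$ and $q$ equal to the corollary's parameter $p$. Three hypotheses must be verified before concluding: (i) $p' = 1 \in \{1,\ldots,n\}$, which is immediate; (ii) the integrality/size condition $q \geq 2$, which follows because the corollary assumes $p > 1$, hence $p \geq 2$; and (iii) the product bound $p'q = p \leq n+1$, which is exactly the upper bound on $p$ in the corollary hypothesis. With all three conditions in place, the proposition yields $\tilde{z}_{p'q} = \tilde{z}_p \geq \tilde{z}_{p'} = \tilde{z}_1$, which is precisely the claim.

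There is no genuine obstacle: the corollary is a one-line specialization of the preceding proposition. The only delicate point is notational, namely keeping the two uses of the letter $p$ (one as the free variable in the proposition, the other as the free variable in the corollary) distinct; writing out the substitution $p' \leftarrow 1$, $q \leftarrow p$ explicitly, as above, removes any ambiguity and makes the derivation unambiguous.
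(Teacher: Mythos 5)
Your proposal is correct and matches the paper's own derivation exactly: the paper states the corollary follows by ``using $p=1$ and $q=p$ in the above theorem,'' which is precisely your substitution $p' \leftarrow 1$, $q \leftarrow p$, with the hypotheses $q \geq 2$ and $p'q \leq n+1$ guaranteed by $1 < p \leq n+1$. Your explicit renaming to disentangle the overloaded symbol $p$ is a welcome clarification but does not change the argument.
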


Furthermore, no formulation is strictly stronger than the set partitioning formulation.
\begin{proposition}
For any $1 \leq p < n+1$, it holds that $ \tilde{z}_{n+1} \geq \tilde{z}_p$.
\end{proposition}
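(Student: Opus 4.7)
The plan is to show that every feasible solution of the LP relaxation of the $(n+1)$-step formulation induces a feasible solution of the LP relaxation of the $p$-step formulation with the same objective value; since both are minimization problems, this yields $\tilde{z}_p \leq \tilde{z}_{n+1}$. The construction proceeds analogously to that of Proposition~\ref{prop:particularcasesobj_v2}, slicing each longer partial path into $p$-step pieces, the only new wrinkle being that $n+1$ need not be a multiple of $p$.

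First I would take an optimal LP solution $(\lambda^{n+1},\phi)$ of the $(n+1)$-step formulation. By Proposition~\ref{prop:particularcases} I may regard it as being supported on routes $s$ of length $k \leq n+1$ that start at node $0$ and end at node $n+1$. Writing $k = q'p + k'$ with $0 \leq k' < p$, I would slice $s$ into, when $k' > 0$, an initial $k'$-step inheriting the starting node $0$ followed by $q'$ consecutive $p$-steps, and, when $k' = 0$, simply the $q'$ $p$-steps whose first one starts at $0$. Every resulting piece lies in the set $\mathcal{S}$ of the $p$-step formulation, since the only sub-$p$-length paths permitted there are those starting at the depot and by construction the leftover always sits at the head. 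I would then define $\lambda^p_{s'} = \sum_{s}\lambda^{n+1}_s$, summing over all routes $s$ whose decomposition contains $s'$, and keep the $\phi$ variables unchanged.

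The next step is to verify the constraints. Objective preservation is immediate from additivity of arc costs across the decomposition. The visit and flow-balance constraints \eqref{eq:vrp:pstep:2}--\eqref{eq:vrp:pstep:1b} follow by telescoping: a customer interior to one piece contributes to $e_i^{s'}$ and to $a_i^{s'}$ exactly as it did in $s$, while a customer at a boundary between two consecutive pieces contributes $+1$ from the second piece to \eqref{eq:vrp:pstep:2} and $-1, +1$ (which cancel) to \eqref{eq:vrp:pstep:1a}; because every $s$ starts at $0$ and ends at $n+1$, no customer appears as the first node of the first piece nor as the last node of the last piece. For the capacity constraints \eqref{eq:vrp:pstep:3}--\eqref{eq:vrp:pstep:4} I would observe that each arc of $s$ belongs to exactly one of its pieces, so $\sum_{s' \in \mathcal{S}_{ij}}\lambda^p_{s'} = \sum_{s \in \mathcal{S}_{ij}}\lambda^{n+1}_s$ for every arc $(i,j)$; the $p$-step capacity inequality then reduces to the corresponding $(n+1)$-step inequality, already satisfied with the same $\phi$.

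The main obstacle is ensuring that the short leftover piece actually belongs to $\mathcal{S}$ when $p \nmid k$. The only way this succeeds is to place the leftover at the head of the decomposition so that it inherits the depot as its starting node, and the fact that Proposition~\ref{prop:particularcases} forces every support element of $\lambda^{n+1}$ to begin at $0$ is precisely what makes this placement legal. The remaining bookkeeping is the same convex-aggregation argument already seen in the proof of Proposition~\ref{prop:particularcases}.
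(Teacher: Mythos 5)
Your proposal is correct and follows essentially the same route as the paper's own (very terse) proof: cut each $(n+1)$-step or depot-starting $k$-step in the support of the optimal LP solution into consecutive $p$-steps plus one leftover piece, placed at the head so that it starts at the depot and hence lies in $\mathcal{S}$, then aggregate the $\lambda$ values and keep $\varphi$ unchanged. The only cosmetic slip is your claim that Proposition \ref{prop:particularcases} lets you assume every support element ends at $n+1$ --- short $k$-steps starting at $0$ may end at any node --- but this does not affect your piece-by-piece bookkeeping, since a customer that is the last node of the last piece contributes the same $a_i$ and $e_i$ values as it did in the original path.
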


\begin{proof}
We use a similar argument as in the proof of Proposition~\ref{prop:particularcasesobj_v2}. Any $(n+1)$-step or $k$-step that starts at 0, with $1\leq k < n+1$, selected in the set-partitioning formulation, can be cut into $p$-steps and $k'$-steps, with $1\leq k' < p$. This shows that $\tilde{z}_{n+1}\geq \tilde{z}_p$. \hfill$\square$.
\end{proof}

In the remainder of this section, we show that one cannot compare the formulations for $p\neq 1$ and $p'\neq n+1$ in general, if $p' > p$ is not a multiple of $p$.

\newcommand{\e}{e}

\begin{proposition}\label{prop:notMultiple1}
Let $p\geq 2$, $q\geq 1$ and $1\leq k < p$ be given. Define $p'=q\cdot p + k$. There exists an instance of the CVRP for which the \emph{strict} inequality
$$\tilde{z}_{p'} < \tilde{z}_p$$
holds.
\end{proposition}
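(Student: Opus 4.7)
The plan is to exhibit an explicit CVRP instance on which the strict inequality can be verified directly. The structural property that makes $p' = qp + k$ with $1 \leq k < p$ special is the following: in any $p$-step formulation, every non-starter partial path uses \emph{exactly} $p$ arcs, and only a starter piece is allowed to use between $1$ and $p-1$ arcs; but a starter piece must emanate from the depot $0$. Hence, a non-starter $p'$-step path, which uses $p'$ arcs, cannot be cut into non-starter $p$-step pieces (since $p \nmid p'$). Any $p$-step decomposition of a contiguous segment of $p'$ arcs that does not touch the depot would need $p'/p$ whole pieces, which is not an integer, so it must either change endpoints or leave an uncoverable residue of $k$ arcs. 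This rigidity is what a well-chosen instance will convert into a strictly positive cost penalty.

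Concretely, the instance I would build is a graph with a ``cheap interior'' portion (for instance, a path or cycle of arcs of cost $0$) of length $p'$, surrounded by ``expensive'' bridge arcs leading back to the depots $0$ and $n+1$. The number of customers, their demands, and the vehicle capacity are set so that at least two routes are required and both are forced to share the cheap interior. The proposed fractional $p'$-step solution places weight $\tfrac12$ on two symmetric non-starter $p'$-step paths, each covering the cheap interior once, glued to short starter pieces from the depot. This fractional solution satisfies the visit constraints~\eqref{eq:vrp:pstep:2}, the linking constraints~\eqref{eq:vrp:pstep:1a}, and the capacity constraints~\eqref{eq:vrp:pstep:3}--\eqref{eq:vrp:pstep:4} by symmetry, and pays only for the cheap arcs plus two half-weighted bridge arcs. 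In contrast, reproducing the same cheap-interior coverage with non-starter $p$-step pieces is impossible without spilling weight onto at least one additional expensive bridge arc, precisely because $p' \bmod p = k \neq 0$.

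The verification then splits into two tasks. The first is a routine check of primal feasibility and the objective value of the proposed $p'$-step LP solution, giving some upper bound $V$ on $\tilde z_{p'}$. The second, and main obstacle, is establishing a matching lower bound $\tilde z_p \geq V + \varepsilon$ for some $\varepsilon>0$. I plan to handle this either by explicitly constructing a dual-feasible solution of the $p$-step LP of value $V + \varepsilon$, or by aggregating the visit constraints~\eqref{eq:vrp:pstep:2} together with the linking constraints~\eqref{eq:vrp:pstep:1a} along the cheap interior to obtain a valid inequality that forces a strictly positive total weight on at least one bridge arc. The delicate point is to calibrate the bridge-arc cost (as a function of $p$, $q$, $k$) large enough that the arithmetic residue $k$ translates into a genuine cost gap $\varepsilon>0$, and not merely into a structural asymmetry that can be absorbed by other fractional pieces. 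Once this calibration is made, combining the upper and lower bounds yields $\tilde z_{p'} \leq V < V + \varepsilon \leq \tilde z_p$, as required.
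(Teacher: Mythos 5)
There is a genuine gap here, and it sits exactly where you flag it yourself: the lower bound on $\tilde z_p$. Your proposal never constructs the instance, never writes down the fractional $p'$-step solution in enough detail to check \eqref{eq:vrp:pstep:2} and \eqref{eq:vrp:pstep:1a} (note that with only two reversed copies of the interior path at weight $\tfrac12$, the two interior endpoints are each covered only $\tfrac12$ as non-last nodes, so the gluing to starter pieces is not cosmetic), and, most importantly, it defers the proof that $\tilde z_p \geq V+\varepsilon$ to a dual solution or an aggregated valid inequality that is never exhibited. Beyond being incomplete, the proposed mechanism is doubtful on its own terms: the fact that $p\nmid p'$ prevents a non-starter $p'$-step from being cut into non-starter $p$-steps only blocks the usual refinement argument (which would give $\tilde z_p\leq \tilde z_{p'}$); it does not by itself force the $p$-step LP to pay more. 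In particular, with a zero-cost interior of $p'$ arcs, fractional $p$-steps lying entirely inside that interior also cost zero, and the LP has enough freedom (overlapping shifted copies, reversals, fractional weights) to cover the interior customers and balance \eqref{eq:vrp:pstep:1a} without spilling extra weight onto the bridge arcs. The arithmetic residue $k$ is a combinatorial obstruction to an exact integral tiling, not to a fractional cover, so there is no reason to expect a strictly positive $\varepsilon$ from that instance.

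The paper's construction avoids this by making \emph{every} $p$-step expensive, not by making the interior hard to tile. It takes $n=(q+1)p$ unit-demand customers in $q+1$ clusters of exactly $p$ nodes each, with negligible intra-cluster distances, inter-cluster distance $1$, and a far-away depot. Since a $p$-step visits $p+1$ nodes, it cannot stay inside one cluster, so $c_s\geq 1$ for every $p$-step; summing \eqref{eq:vrp:pstep:2} over $i$ gives $\sum_s\lambda_s\geq n/p$ and hence $\tilde z_p\geq n/p$ for \emph{any} feasible fractional solution --- no dual construction or calibration needed. On the other side, $n$ cyclically permuted giant tours truncated to $p'$ arcs each contain all $q$ inter-cluster arcs and nothing else of positive cost, and taking each with weight $1/p'$ yields a feasible solution of cost $nq/p'$, so $\tilde z_{p'}\leq nq/(qp+k)<n/p\leq\tilde z_p$. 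The role of $k\geq 1$ is thus purely arithmetic ($q/p'<1/p$), a cost-per-customer comparison, rather than the tiling obstruction your proposal relies on. If you want to salvage your approach, you would need to replace the zero-cost interior by something that certifiably charges every $p$-step, which is essentially what the clustered instance does.
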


\begin{proof}
Define $n=(q+1)\cdot p$ and assume that the nodes are clustered: they appear in $m=q+1$ groups of $p$ nodes. Within a cluster, the distance is negligible. The clusters themselves are located on the vertices of a regular convex $m$-polygon. The length of an edge of the polygon is normalized to 1. The depot is located far away from all nodes: The distance from the depot to each node is larger than 1. The capacity of a vehicle is equal to $n$. All nodes have unit demand.\\

We now construct a feasible solution that uses $p'$-steps. In order to so, we first define a set of $(p-1)$-steps. For each cluster $1\leq c\leq m$ we consider the so-called \emph{regular} $(p-1)$-step 
$$(c-1)\cdot p + 1 \rightarrow \ldots \rightarrow (c-1)\cdot p + p.$$
We can cyclically permute these $(p-1)$-steps. We denote $P_c^t$ as the above $(p-1)$-step that is cyclically permuted $t$ times, for $0\leq t < p$. Formally, it is defined by
$$P_c^t = (c-1)\cdot p + [1 + t]_p \rightarrow \cdots \rightarrow (c-1)\cdot p + [p + t]_p.$$
Here, we denote $[a]_b$ for $a\mod b$. We now define for every $(t,c) \in \{0,\ldots, p-1\}\times\{1,\ldots,m\}$, the following $(m\cdot p - 1)$-step
$$P_c^t \Rightarrow P_{[c+1]_m}^t \Rightarrow P_{[c+2]_m}^t \Rightarrow \ldots \Rightarrow P_{[c+m-1]_m}^t.$$
Here, arcs denoted by `$\Rightarrow$' have length 1 whereas arcs denoted by `$\rightarrow$' have negligible length. There are $p\cdot m =n$ such $(m\cdot p -1)$-steps.

By construction, for a given position between 1 and $n$, every node appears at that position in exactly one $(m\cdot p - 1)$-step. This also holds if we truncate all $(m\cdot p -1)$-steps after $p'$ steps, thereby obtaining $p'$-steps. This gives us a set of $n$ $p'$-steps. Selecting all these $p'$-steps with $\lambda=\frac{1}{p'}$ gives a feasible solution to the problem, if we also define $\phi_i = 1$ for all $i$. The non-trivial step is to show that Constraints~\eqref{eq:vrp:pstep:3} are satisfied. Note that any arc $(i,j)$ \emph{within} a cluster, satisfies
\begin{equation}\label{eq:flow}
\sum_{s \in \mathcal{S}_{ij}} \lambda_s \leq (q (p-1) + k)\cdot \frac{1}{p'} = \frac{p'- q}{p'}.
\end{equation}
(In particular, it is equal to the right hand side if $j = [i+1]_p$ and 0 otherwise). Arcs $(i,j)$ \emph{between} clusters satisfy
$$ \sum_{s \in \mathcal{S}_{ij}}\lambda_s\leq q \frac{1}{p'} = \frac{q}{p'}.$$
From $2q \leq pq < pq + k = p'$ we observe that $q < p'-q$. It follows that the flow over \emph{all} arcs $(i,j)$ is bounded by the right hand side of \eqref{eq:flow}. We now show that Constraints~\eqref{eq:vrp:pstep:3} are satisfied for this flow over arc $(i,j)$.
Given that $q\geq 1$, it follows that
$$p' < n < n+1 \leq q(n+1).$$
This shows that $p'-q < q n$. Dividing by $p'$, this yields
$$\frac{p'-q}{p'} < n \frac{q}{p'} = n \left(1 - \frac{p'-q}{p'}\right).$$
For all $0\leq i\leq n$ and $1\leq j \leq n+1$, it follows that
$$\phi_j - \phi_i = 0 > \frac{p'-q}{p'} - n\left(1 - \frac{p'- q}{p'}\right).$$
We conclude that the solution satisfies Constraints~\eqref{eq:vrp:pstep:3} if the flow over $(i,j)$ equals $\frac{p'-q}{p'}$. It follows easily that the constraints are also satisfied if the flow is smaller.

The distance traveled (so, the costs) for these $p'$-steps equals $m-1=q$. (By construction, only negiglible arcs are removed by the truncation.) The optimal objective of the LP-relaxation with $p'$-steps is at most the total costs of this feasible solution:
$$\tilde{z}_{p'} \leq n \frac{1}{p'} q = \frac{n\cdot q}{p'}.$$

Consider now any $p$-step $s$. Any $p$-step $s$ satisfies
$$\sum_{i=1}^n \e_{is} \leq p.$$
For any vector $\lambda_s$, we multiply this by $\lambda_s$, sum over $s$ and use \eqref{eq:vrp:pstep:2}. We then obtain
$$n = \sum_{i=1}^n \sum_{s\in \mathcal{S}} \lambda_s \e_{is}= \sum_{s\in \mathcal{S}} \sum_{i=1}^n \lambda_s \e_{is}=\sum_{s\in \mathcal{S}} \lambda_s \sum_{i=1}^n  \e_{is} \leq \sum_s \lambda_s p = p \sum_{s}\lambda_s.$$
The cost $c_s$ of a $p$-step is at least 1. Hence
$$\sum_{s\in \mathcal{S}} \lambda_s \leq \sum_s \lambda_s c_s.$$
Combining the above inequalities, we find
$$n \leq p \sum_{s\in \mathcal{S}} \lambda_s \leq p \sum_{s\in \mathcal{S}} c_s\lambda_s.$$
As this holds for every vector $\lambda_s$, it holds particularly for the optimal solution of the LP-relaxation with $p$-steps. Hence 
$$\tilde{z}_p = \sum_s \lambda_s c_s \geq \frac{n}{p}.$$
Combining the expressions for the values of the LP-relaxations, we obtain
$$\tilde{z}_{p'} \leq \frac{nq}{p'} = \frac{nq}{qp+k} < \frac{nq}{qp} = \frac{n}{p} \leq \tilde{z}_p.$$
This proves the claim.\hfill$\square$
\end{proof}

\begin{proposition} \label{prop:notMultiple2}
Let $p\geq 2$, $q\geq 1$ and $1\leq k < p$ be given. Define $p'=q\cdot p + k$. There exists an instance of the CVRP for which the \emph{strict} inequality
$$\tilde{z}_{p'} > \tilde{z}_p$$
holds.
\end{proposition}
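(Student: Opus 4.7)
The plan is to construct an explicit CVRP instance, analogous to the one in Proposition~\ref{prop:notMultiple1} but with the roles of $p$ and $p'$ essentially swapped, in which the shorter $p$-step paths can be combined fractionally to cover customers cheaply while every feasible $p'$-step path is forced by its length alone to incur strictly positive edge costs. Concretely, I would place $n$ customers in $m$ clusters of size chosen so that a single elementary $p$-step (which visits $p+1$ nodes) fits inside one cluster but no elementary $p'$-step does; assign negligible cost to within-cluster arcs, unit cost to between-cluster arcs, and a large cost $L$ to depot-incident arcs, with unit demands and non-binding capacity. The clusters would sit on a regular $m$-polygon as in the proof of Proposition~\ref{prop:notMultiple1}, with $m$ chosen as a tunable parameter.

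For the upper bound on $\tilde{z}_p$, I would construct a feasible fractional $p$-step solution that uses within-cluster Hamiltonian $p$-steps weighted uniformly (e.g., $1/p$ per cyclic rotation), glued together with a small number of depot-starting $k$-step paths (for $k\leq p$) to satisfy the balance Constraints~\eqref{eq:vrp:pstep:1a} and the vehicle-count Constraint~\eqref{eq:vrp:pstep:1b}. The $\varphi$ variables would be set following the same recipe as in the proof of Proposition~\ref{prop:notMultiple1} (essentially $\varphi_i = 1$ per customer), with the rotation weights tuned so that the arc usage $\sum_{s\in\mathcal{S}_{ij}}\lambda_s$ stays strictly below the threshold required by Constraints~\eqref{eq:vrp:pstep:3}. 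The resulting cost is dominated by the depot contribution and contains no between-cluster unit-cost arcs.

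For the lower bound on $\tilde{z}_{p'}$, I would adapt the counting argument at the end of the proof of Proposition~\ref{prop:notMultiple1}: aggregating Constraints~\eqref{eq:vrp:pstep:2} across all customers and using $\sum_i e_i^s \leq p'$ yields $\sum_{s\in\mathcal{S}}\lambda_s \geq n/p'$. Since every $p'$-step (elementary, with $p'+1$ nodes) cannot be contained in a single cluster of size $\leq p'$, it must traverse at least one between-cluster arc of unit cost. Depot-starting $k$-step paths with $k < p'$ contribute an additional fixed cost per route from the expensive depot arcs. Combining these ingredients, $\tilde{z}_{p'}$ admits a lower bound of order $n/p'$ plus depot-cost overhead, which grows linearly in $m$. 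By choosing $m$ sufficiently large (and $L$ balanced so that the depot-cost overhead is comparable in both formulations), the strict inequality $\tilde{z}_{p'} > \tilde{z}_p$ follows.

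The main technical obstacle is the feasibility of the fractional $p$-step solution with respect to Constraints~\eqref{eq:vrp:pstep:3}: a naive uniform rotation weighting inside a cluster creates ``cyclic arc flow'' (with $\sum_{s\in\mathcal{S}_{ij}}\lambda_s$ reaching $1$ on cycle arcs) that forces an infeasibility in the capacity chain, analogous to the one avoided in Proposition~\ref{prop:notMultiple1} by keeping the per-arc flow strictly below $(p'-q)/p'$. Calibrating the rotation weights, the cluster size, and the depot-closure paths simultaneously so that \eqref{eq:vrp:pstep:2}, \eqref{eq:vrp:pstep:1a}, and \eqref{eq:vrp:pstep:3} all hold, while preserving the strict cost gap against the $p'$-step lower bound, is the delicate calculation at the heart of the proof.
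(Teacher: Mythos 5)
Your overall strategy matches the paper's: a clustered instance in which every $p$-step fits inside a single cluster of negligible diameter while every $p'$-step is forced, by its length, to cross a unit-cost inter-cluster arc, combined with the same aggregation of Constraints~\eqref{eq:vrp:pstep:2} to get $\tilde{z}_{p'}\geq n/p'$. However, the one step you explicitly leave open --- making the fractional within-cluster $p$-step solution feasible for Constraints~\eqref{eq:vrp:pstep:3} --- is precisely the heart of the proof, and your proposed workaround points in the wrong direction. The paper's resolution is simple: take clusters of $p+1$ nodes, include \emph{both} the $p+1$ cyclic rotations of the within-cluster Hamiltonian path \emph{and their reversals}, and assign each of these $2(p+1)$ steps per cluster the weight $\lambda_s=\tfrac{1}{2p}$. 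Each directed arc then carries flow exactly $\tfrac12$ (not $1$), so with $\varphi_i=1$ and $Q=n$ the big-$M$ slack in \eqref{eq:vrp:pstep:3} absorbs the cyclic structure, and $\tilde{z}_p\leq 0$. Your forward-only weighting with $1/p$ per rotation does push the flow on cycle arcs to $1$ and is infeasible, as you suspected; the reversal trick is the missing idea.

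A second, related misconception: you do not need depot-starting $k$-steps at all, and introducing a large depot cost $L$ to be ``balanced'' between the two formulations only creates work. Constraints~\eqref{eq:vrp:pstep:1a} are imposed only for customer nodes $i=1,\ldots,n$ and are automatically satisfied by the cyclic construction (each customer is the first node of exactly as many selected steps as it is the last node of), while \eqref{eq:vrp:pstep:1b} is an inequality that holds trivially when no step leaves the depot. The LP relaxation does not force the fractional solution to assemble into depot-anchored routes, and the paper exploits exactly this to get a zero-cost feasible $p$-step solution, after which $\tilde{z}_{p'}\geq n/p'\geq 1>0\geq\tilde{z}_p$ closes the argument with no calibration of $m$ or $L$ needed beyond $n=m(p+1)\geq p'$.
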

\begin{proof}
Define $m \in \textbf{N}$ such that $n=m(p+1)\geq p'$. Consider $m$ clusters with $p+1$ nodes and a depot far away. Again, the distance within a cluster is negligible, while the distance between the clusters is normalized to 1. All customers have unit demand, and the capacity of the vehicle is equal to $n$.

We first generate a feasible solution using $p$-steps with negligible costs. Recall the definition of the $p$-steps $P^1_c,\ldots P^{p+1}_c$ for $1\leq c \leq m$ from the proof of Proposition~\ref{prop:notMultiple1} and also define $\bar{P}^t_c$ for $1\leq t\leq p+1$ and $1\leq c\leq m$ as the $p$-step $P^t_c$ in reverse order. This gives us $2m(p+1)$ $p$-steps. Define $\lambda_s=\frac{1}{2p}$ for all of them and $\phi_i=1$ for all $1\leq i\leq n$. By construction, the flow over any arc $(i,j)$ is at most $\frac12$. It follows that Constraints \eqref{eq:vrp:pstep:3} are satisfied. Thus, we defined a feasible solution with costs 0. It follows that $\tilde{z}_{p}\leq 0$. \\
By construction, any $p'$-step has strictly positive costs, as it uses at least one arc from one cluster to another or from the depot to a cluster. Hence, any $p'$-step $s$ satisfies $c_s\geq 1$. The inequality 
$$\sum_{i=1}^n \e_{is} \leq p'$$
now implies that
$$n  =\sum_{i=1}^n \e_{is} \lambda_s = \sum_{s\in \mathcal{S}} \lambda_s \sum_{i=1}^n \e_{is} \leq p'\sum_{s\in \mathcal{S}}\lambda_s \leq p'\sum_{s\in \mathcal{S}} \lambda_s c_s = p'\tilde{z}_{p'}.$$
We obtain 
$$\tilde{z}_{p'}\geq \frac{n}{p'} \geq 1 > \tilde{z}_p.$$ This proves the claim. \hfill$\square$
\end{proof}

\section{Column generation for the $p$-step formulations} \label{sec:cg:pstep}

Any $p$-step formulation can be seen as a column generation model, as any of its columns (variables) can be generated by following a known rule. Of course, for $p$ small, the number of columns in the formulation is polynomial in terms of the number of nodes and hence it can be practical to enumerate them beforehand. Even so, for large-scale problems it can be more advantageous to recur to column generation, as only a few variables will be nonzero at the optimal solution. 

Consider the linear relaxation of the $p$-step formulation \eqref{eq:vrp:pstep:obj}--\eqref{eq:vrp:pstep:5} having only the columns corresponding to an arbitrary subset $\overline{\mathcal{S}} \subset \mathcal{S}$. This leads to the following restricted master problem (RMP):
\begin{eqnarray}
\mbox{min} & \displaystyle  \sum_{s \in \overline{\mathcal{S}}} c^{s} \lambda^{s} &  \label{eq:vrp:pstep:rmp:obj} \\
\mbox{s.t.} & \displaystyle  \sum_{s \in \overline{\mathcal{S}}} e_{i}^{s} \lambda^{s} = 1, & i = 1, \ldots, n, \label{eq:vrp:pstep:rmp:2} \\
& \displaystyle  \sum_{s \in \overline{\mathcal{S}}} a_{i}^{s} \lambda^{s} = 0, & i = 1, \ldots, n, \label{eq:vrp:pstep:rmp:1a} \\
& \displaystyle  \sum_{s \in \overline{\mathcal{S}}} a_{0}^{s} \lambda^{s} \leq K, &  \label{eq:vrp:pstep:rmp:1b} \\
& \displaystyle \varphi_{i} - \varphi_{j} + \sum_{s \in \overline{\mathcal{S}}_{ij}} (q_j + Q) \lambda^{s} \leq Q, & i = 0, \ldots, n, \ j = 1, \ldots, n+1, \label{eq:vrp:pstep:rmp:3}\\
& \displaystyle q_{i} \leq \varphi_{i} \leq Q, &  i = 1, \ldots, n, \label{eq:vrp:pstep:rmp:4} \\
& \displaystyle \lambda_s  \geq 0, & s \in \overline{\mathcal{S}}, \label{eq:vrp:pstep:rmp:5}
\end{eqnarray}
where $\overline{\mathcal{S}}_{ij}$ has the same meaning as ${\mathcal{S}}_{ij}$, but considers only the paths in $\overline{\mathcal{S}}$.
Notice that we have written constraints \eqref{eq:vrp:pstep:rmp:3}--\eqref{eq:vrp:pstep:rmp:5} in a slightly different way for the sake of clarity. 

Let $u^1 = (u^1_1, \ldots, u^1_n) \in \mathbb{R}^n$, $u^2 = (u^2_1, \ldots, u^2_{n}) \in \mathbb{R}^n$, $u^3 \in \mathbb{R}$ and $u^4 = (u^4_{01}, u^4_{02}, \ldots, u^4_{n, n+1})\in \mathbb{R}^{n+1 \times n+1}$ be the dual variables associated to constraints \eqref{eq:vrp:pstep:rmp:2}--\eqref{eq:vrp:pstep:rmp:3}, respectively. Given a dual solution $\overline{u} = (\overline{u}^1, \ldots, \overline{u}^4)$ of the RMP, where we assume $\overline{u}^1_{0} = \overline{u}^1_{n+1} = 0$, the reduced cost of the column corresponding to a path $(v_0, v_1, \ldots, v_k)$ is given by:
\begin{eqnarray}
\displaystyle rc(\overline{u}) &=& \sum_{j = 0}^{k-1} \left( c_{v_j v_{j+1}} - \overline{u}^{1}_{v_j} - (q_{v_{j+1}} + Q) \overline{u}^{4}_{v_j v_{j+1}} \right)  \nonumber \\ 
& & - \delta({v_0 \neq 0}) \overline{u}^{2}_{v_0} + \delta({v_k \neq n+1}) \overline{u}^{2}_{v_k} - \delta({v_0 = 0}) \overline{u}^{3},  \nonumber 
\end{eqnarray}
where $\delta(C)$ is equal to 1 if condition $C$ is true; $0$, otherwise.

Any feasible path with negative cost corresponds to a path in $\mathcal{S}\backslash\overline{\mathcal{S}}$. This can be used to generate a new column (variable) that has a negative reduced cost and then should be added to the current RMP. After solving the modified RMP a new dual solution is obtained and the process is repeated. If it is not possible to find a path with negative cost, then the current optimal solution of the RMP is also optimal for the linear relaxation of \eqref{eq:vrp:pstep:obj}--\eqref{eq:vrp:pstep:5}. 

Similarly, for the VRPTW, we have the following restricted master problem (RMP):
\begin{eqnarray}
\mbox{min} & \displaystyle  \sum_{s \in \overline{\mathcal{S}}} c^{s} \lambda^{s} &  \label{eq:vrp:pstep:rmp2:obj} \\
\mbox{s.t.} & \displaystyle  \sum_{s \in \overline{\mathcal{S}}} e_{i}^{s} \lambda^{s} = 1, & i = 1, \ldots, n, \label{eq:vrp:pstep:rmp2:2} \\
& \displaystyle  \sum_{s \in \overline{\mathcal{S}}} a_{i}^{s} \lambda^{s} = 0, & i = 1, \ldots, n, \label{eq:vrp:pstep:rmp2:1a} \\
& \displaystyle  \sum_{s \in \overline{\mathcal{S}}} a_{0}^{s} \lambda^{s} \leq K, &  \label{eq:vrp:pstep:rmp2:1b} \\
& \displaystyle \varphi_{i} - \varphi_{j} + \sum_{s \in \overline{\mathcal{S}}_{ij}} (q_j + Q) \lambda^{s} \leq Q, & i = 0, \ldots, n, \ j = 1, \ldots, n+1, \label{eq:vrp:pstep:rmp2:3}\\
& \displaystyle q_{i} \leq \varphi_{i} \leq Q, &  i = 1, \ldots, n, \label{eq:vrp:pstep:rmp2:4} \\
& \displaystyle \omega_{i} - \omega_{j} +  \sum_{s \in \overline{\mathcal{S}}_{ij}} ( s_i + t_{ij} + M_{ij} ) \lambda^s \leq M_{ij}, &  i = 0, \ldots, n, \ j = 1, \ldots, n+1, \label{eq:vrp:pstep:rmp2:4.1} \\
& w^a_i \leq \omega_{i} \leq w^b_i, & i = 0, \ldots, n+1, \label{eq:vrp:pstep:rmp2:4.2} \\
& \displaystyle \lambda_s  \geq 0, & s \in \overline{\mathcal{S}}, \label{eq:vrp:pstep:rmp2:5}
\end{eqnarray}
where $M_{ij}$ is a sufficiently large constant, e.g. $M_{ij} = w^b_i - w^a_j$. As in the CVRP formulation, we have dual variables $u^1$ to $u^4$ corresponding to constraints \eqref{eq:vrp:pstep:rmp2:2} to \eqref{eq:vrp:pstep:rmp2:3}. Additionally, $u^5 = (u^5_{01}, u^5_{02}, \ldots, u^5_{n, n+1})\in \mathbb{R}^{n+1 \times n+1}$ are the dual variables associated to constraints \eqref{eq:vrp:pstep:rmp2:4.1}. Given a dual solution $\overline{u} = (\overline{u}^1, \ldots, \overline{u}^5)$ of the RMP, where we assume $\overline{u}^1_{0} = \overline{u}^1_{n+1} = 0$, the reduced cost of the column corresponding to a route $(v_0, v_1, \ldots, v_k)$ is given by:
\begin{eqnarray}
\displaystyle \bar{z}^{ }_{SP} &=& \sum_{j = 0}^{k-1} \left( c_{v_j v_{j+1}} - (+1) \overline{u}^{1}_{v_j} - (q_{v_{j+1}} + Q) \overline{u}^{4}_{v_j v_{j+1}} - (s_{v_j} + t_{v_{j},v_{j+1}} + M_{v_{j},v_{j+1}}) \overline{u}^{5}_{v_j v_{j+1}} \right) \nonumber \\ 
& & - \delta({v_0 \neq 0}) \overline{u}^{2}_{v_0} + \delta({v_k \neq n+1}) \overline{u}^{2}_{v_k} - \delta({v_0 = 0}) \overline{u}^{3}.  \nonumber 
\end{eqnarray}

Regarding the reduced cost of a column in the set partitioning formulation, we can observe that more dual information is provided for the subproblem in a $p$-step formulation. Indeed, the shadow prices of resources are provided by the RMP and can be used to guide the decision at the subproblem level. In addition, this information can be used with no extra cost in the subproblem, as the duals are defined for pair of nodes and hence can be included as additional costs on the edges of the network. Therefore, even though $p$-step formulations have additional constraints in the MP with respect to SP formulations, we can ensure that they are robust \cite{fukasawa2006}, as the difficulty of solving the subproblem will be the same as in the SP formulation. 

\section{Advantages and disadvantages of the $p$-step formulations}\label{sec:advantages}

We address now a few advantages of the $p$-step formulations with respect to the classical formulation for vehicle routing problems. As mentioned before, different types of capacity constraints and valid inequalities can be incorporated to these formulations. They can be even stated in terms of partial paths instead of arcs, when it leads to stronger versions. Other advantage is that different requirements can be imposed directly to the master problem, which can be very convenient when dealing with rich vehicle routing problems and integrated problems, such as location routing and inventory routing problems \cite{ceselli2009,desaulniers2015}.

At the subproblem level, $p$-step formulations may lead to a better performance, as the label extension is limited by a new resource, the number of steps. Also, the subproblems can be solved in parallel, by splitting the label extension by starting node. Then, we are able to solve $n+1$ subproblems in parallel, which is suitable for the current multi-core CPU machines.

As proposed in this paper, the size of the partial paths in the $p$-step formulation is limited by the number of traversed arcs. Since this can be seen as a resource, other types of resource may be used to limit a path: capacity, timing, etc. For example, we could generate partial paths in which the maximum load is a percentage of the capacity, or the total travel time is less than a percentage of the final time instant. This is somehow a generalization of the bidirectional label-extension \cite{righini2008}, but with the joining of paths done at the master problem. This allows for any type of partitioning in the label-extension, instead of using two partitions only (e.g. several partitions of time).

Of course, $p$-step formulations have a few disadvantages as well. The first one is that the quality of the bound provided by the linear relaxation of a $p$-step formulation depends on $p$. A large $p$ leads to a bound as good as that obtained from the SP formulation, while $p=1$ leads to the weak linear relaxation of the VF formulation. A good strategy would be the use of a \textit{turning point strategy}, in which the value of $p$ is increased during the solution process. Hence, at the turning point, the $p$-step formulation is converted to a $(p+k)$-step formulation, $k \geq 1$, by explicitly combining $p$-step paths to obtain $(p+k)$-step paths.

Another disadvantage of the $p$-step formulation is related to the size of the master problem. Instead of the usual $n$ constraints of the SP formulation, a $p$-step model of the VRPTW has $4n + 2n^2$ constraints, like in the VF formulation. Although the current linear programming solvers are very powerful nowadays, the solution time can be relatively large for $n$ large. Nevertheless, interior point methods can help to overcome this weaknesses, specially if aided by active set strategies for identifying inactive constraints \cite{gondzio2013,munari2013,gondzio2015mpc}.

\section{Conclusions}

In this working paper, we have introduced a general class of formulations for vehicle routing problems, namely the $p$-step formulations. They offer several advantages over classical formulations and seem to be promising in practice. Theoretical results presented in this paper show that the classical formulations are special cases of the $p$-step formulations. Also, the proposed formulation can be put in a column generation scheme that allows more dual information to be sent to the subproblem.

This is still a ongoing research, in its very early stage. A computational implementation of a branch-price-and-cut method for the $p$-step formulation is in course and should be finished soon. Computational results will be reported in a future version of this working paper.

\section*{Acknowledgments}

The authors are thankful to Claudio Contardo and Silvio Araujo for pointing out references \cite{petersen2009} and \cite{fragkos2016} in private communication.

%
%


\end{document}